\RequirePackage{fix-cm} 

\documentclass[a4paper]{article}
\makeatletter
\def\cl@chapter{}
\makeatother

\usepackage[utf8]{inputenc}
\usepackage{amssymb}
\usepackage{amsmath}
\usepackage{amsthm}

\usepackage{graphicx}
\usepackage{enumitem}
\usepackage{amsfonts}
\usepackage{setspace}
\usepackage[usenames,dvipsnames]{xcolor}
\usepackage[pageanchor=false,colorlinks=true,linkcolor=blue,citecolor=blue,urlcolor=blue]{hyperref}
\usepackage{tikz}
\usepackage{tikz-3dplot}
\usepackage{pgfplots}
\usepackage[normalem]{ulem} 
 \usepackage{geometry} 

 \usepackage{cleveref}

\usetikzlibrary{calc,intersections,positioning,angles,quotes}
\usetikzlibrary{3d,shapes}
\pgfplotsset{compat=1.11}

\newcommand\bR{\mathbb{R}}

\newcommand\fix{\mathrm{fix}}

\newcommand{\tspan}{\mathrm{span}}

 \newcommand{\ls}{\langle}
 \newcommand{\rs}{\rangle}

\newtheorem{theorem}{Theorem}[section]

\newtheorem{definition}{Definition}[section]

\begin{document}

\title{A note on the convergence of  RED algorithms under minimal hypotheses and  open questions}
 \author{Yann  Traonmilin* and J.-F. Aujol \\ 
 Univ. Bordeaux, Bordeaux INP, CNRS, IMB, UMR 5251, F-33400, Talence, France \\ 
 Contact: \url{yann.traonmilin@math.u-bordeaux.fr}}

\date{}
\maketitle
\begin{abstract}
	In this note, we give a convergence result for a modified "regularization-by-denoising"(RED) algorithm under a restricted isometry condition on measurements and a restricted Lipschitz  condition on the considered deep projective prior. This study leads to open questions about the convergence of RED algorithms. 
\end{abstract}

\section{Introduction}
 We consider the observation model 
\begin{equation}
 y = A\hat{x} +e
\end{equation}
where $\hat{x} \in \Sigma \subset \bR^N$ and   $A \in \bR^{m \times N} $ and $e \in \bR^m$. The set $\Sigma$ is a low-dimensional model set. It has been shown \cite{traonmilin2024towards,joundi2025stochastic} that under a restricted isometry property (RIP) on $A$ and a restricted Lipschitz condition on a given generalized projection $P_\Sigma$ onto $\Sigma$, the generalized projected gradient descent (GPGD) iterations
\begin{equation}\label{eq:def_GPGD}
	x_{n+1} = P_\Sigma(x_n) - \mu A^T(A P_\Sigma(x_n)-y)
\end{equation}
stably recover $\hat{x}$ with linear rate,  i.e, there is $C> 0$ such that  
\begin{equation}
	\|	x_{n}-\hat{x} \|_2 \leq (\delta\beta)^n\|x_0-\hat{x} \|_2 + C \|e\|_2.
\end{equation}
This result has been extended to stability to model error and approximate restricted Lipschitz projections in \cite{joundi2025sparse}. Such a model has been proposed to describe convergence of plug-and-play (PnP) methods for inverse problems where $P_\Sigma$ is computed by a general purpose denoiser.  This mathematical model has been proven useful for the identification of key geometrical parameters driving identifiability and convergence of PnP. Subsequent regularization of deep projective priors have been proposed to enhance such properties~\cite{joundi2025stochastic,joundi2025sparse}. To the best of our knowledge, the RIP and restricted Lipschitz constants are the weakest known hypotheses leading to such  convergence  guarantees. This theoretical set-up has also been extended to diffusion models for inverse problems~\cite{leong2025recovery}.

Another question is to understand if another popular PnP scheme, regularization by denoising (RED) \cite{romano2017little,reehorst2018regularization,sun2019block,cascarano2024constrained} has convergence guarantees under the same minimal hypotheses.  RED iterations are defined by 

\begin{equation}\label{eq:def_RED}
	x_{n+1} = x_n - \mu A^T(A x_n-y) - \lambda(x_n -P_{\Sigma}(x_n)).
\end{equation}

In this note we show convergence of a slightly modified version of the RED algorithm  under similar (stronger) hypotheses, and we ask  open questions arising from these results.
\section{Notations}
We use the formalism from~\cite{traonmilin2024towards}.

\begin{definition}\label{def:RIC}
	The operator $B$  has restricted isometry constant $\delta \in [0,1)$ on the secant set $\Sigma-\Sigma =\{x_1-x_2 : x_1,x_2 \in \Sigma \}$  if for all $x_1,x_2 \in \Sigma$,
	
	\begin{equation}
		\|(I-B)(x_1-x_2)\|_2\leq \delta \|x_1-x_2\|_2.
	\end{equation}
	We denote by $\delta_\Sigma(B)$ the  smallest restricted isometry constant (RIC) of $B$.
\end{definition}

\begin{definition}[Generalized projection]\label{def:proj}
	Let $\Sigma \subset \bR^N$. A (set-valued) generalized projection onto $\Sigma$ is  a (set-valued) function $P$ such that for any $z\in\bR^N$, $P(z) \subset \Sigma$.
\end{definition}

By abuse of notation, to facilitate reading, an equation true for any $w \in P(z)$ is written using the notation $P(z)$. We introduce orthogonal projections (metric projections for the $\ell^2$ norm) on sets where they exist.

\begin{definition}[Proximinal sets and orthogonal projections]\label{def:orth_proj}
	Let $\Sigma \subset \bR^N$. The set $\Sigma$ is proximinal if for all $z \in \bR^N$, we have 
	\begin{equation}
		\left( \arg \min_{x\in\Sigma} \| x-z \|_2   \right) \neq \emptyset.
	\end{equation}
	
	Now suppose $\Sigma$ is a proximinal set, we define the orthogonal projection onto $\Sigma$ as 
	\begin{equation}
		P_\Sigma^\perp(z)  := \arg \min_{x\in\Sigma} \| x-z \|_2.
	\end{equation}
	Notice that $P_\Sigma^\perp(z)$ may be set-valued.
\end{definition}

\begin{definition}[Restricted Lipschitz property]\label{def:lip_const}
	Consider a generalized projection $P$. Then $P$ has the restricted $\beta$-Lipschitz property with respect to $\Sigma$ iff for all  $z \in \bR^N, x \in \Sigma, u \in P(z)$,  we have
	\begin{equation}
			\|u-x\|_2 \leq \beta \|z-x\|_2.
	\end{equation}
	We denote by $\beta_{\Sigma}(P)$ the smallest $\beta$ such that $P$ has the restricted $\beta$-Lipschitz property.
\end{definition}
\section{Convergence of a modified RED algorithm} \label{sec:conv_mod_RED}

For a parameter $\lambda \in [0,1]$, we remark that RED can be interpreted in a two step algorithm:
\begin{itemize}
	\item Select a point between the current iterate $x_n$ and its projection onto $\Sigma$
	\begin{equation}
		z_n = (1-\lambda)x_n + \lambda P_\Sigma(x_n) \in [x_n, P_\Sigma(x_n)]
	\end{equation}
	 i.e. a convex combination of $x_n$ and $P_\Sigma(x_n)$.
	\item Apply a gradient descent step, with the gradient (of a $\ell^2$ datafit) calculated in $x_n$: 
	\begin{equation}
		x_{n+1} = z_n - \mu A^T(Ax_n-y).
    \end{equation}
\end{itemize}
By rewriting $z_n = x_n - \lambda(x_n-P_\Sigma(x_n))$, we verify that we fall on RED iterations~\eqref{eq:def_RED}.

We propose to look at the following modified RED algorithm, we keep the same definition of $z_n$ but calculate the gradient step in $z_n$ instead of $x_n$: 
\begin{equation}\label{def:mod_RED}
	\begin{split}
	z_n &= (1-\lambda)x_n + \lambda P_\Sigma(x_n) \in [x_n, P_\Sigma(x_n)] \\
	x_{n+1} &= z_n - \mu A^T(Az_n-y).\\
	\end{split}
\end{equation}

We consider the case where we are able to approximate the orthogonal projection onto $\Sigma$ (that has been shown to be at least restricted $2$-Lipschitz) \cite{traonmilin2024towards}.
\begin{theorem}
Suppose $ \Sigma$ is a proximinal set. Consider modified RED iterations~\eqref{def:mod_RED} with $\|P_\Sigma - P_\Sigma^\perp\|_2 \leq \eta$. Let  $ \beta = \beta_\Sigma(P_\Sigma)$ and $\delta = \delta_\Sigma(\mu A^TA)$. Suppose $r = (\delta \beta +	|1-\lambda|\| I - \mu A^TA\|_{\mathrm{op}}) < 1$, then 
	
	\begin{equation}
			\|x_{n}-\hat{x}\|_2 \leq r^n	\|x_0-\hat{x} \|_2  + \frac{1}{1-r}\left(|1-\lambda|\| I - \mu A^TA\|_{\mathrm{op}}\eta + \mu \|A^Te\|_2\right).
	\end{equation}
	
\end{theorem}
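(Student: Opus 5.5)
The plan is to derive a one-step contraction inequality for the error $x_{n+1}-\hat{x}$ and then unroll it. Since $y = A\hat{x}+e$, the second line of \eqref{def:mod_RED} gives
\begin{equation*}
x_{n+1}-\hat{x} = (I-\mu A^TA)(z_n-\hat{x}) + \mu A^Te .
\end{equation*}
One could write $z_n-\hat{x}$ as the convex combination $(1-\lambda)(x_n-\hat{x})+\lambda(P_\Sigma(x_n)-\hat{x})$, but that decomposition does not produce the constant $r$ of the statement. Instead I will split
\begin{equation*}
z_n-\hat{x} = \big(P_\Sigma(x_n)-\hat{x}\big) + (1-\lambda)\big(x_n-P_\Sigma(x_n)\big),
\end{equation*}
so that the first piece is a secant vector of $\Sigma$ and the second carries the factor $(1-\lambda)$.

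For the first piece, both $P_\Sigma(x_n)$ and $\hat{x}$ lie in $\Sigma$. Hence Definition~\ref{def:RIC} with $B=\mu A^TA$ gives $\|(I-\mu A^TA)(P_\Sigma(x_n)-\hat{x})\|_2 \le \delta\|P_\Sigma(x_n)-\hat{x}\|_2$. The restricted Lipschitz property (Definition~\ref{def:lip_const}, with $z=x_n$ and $x=\hat{x}$) then bounds this by $\delta\beta\|x_n-\hat{x}\|_2$.

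For the second piece, I bound crudely by the operator norm, giving $|1-\lambda|\,\|I-\mu A^TA\|_{\mathrm{op}}\,\|x_n-P_\Sigma(x_n)\|_2$. This is the step that needs care, because $P_\Sigma$ is only an approximate projection. I will use the triangle inequality together with the hypothesis $\|P_\Sigma-P_\Sigma^\perp\|_2\le\eta$, interpreted pointwise for the chosen elements:
\begin{equation*}
\|x_n-P_\Sigma(x_n)\|_2 \le \|x_n-P_\Sigma^\perp(x_n)\|_2+\eta \le \|x_n-\hat{x}\|_2+\eta .
\end{equation*}
The last inequality holds because $P^\perp_\Sigma(x_n)$ minimizes the distance from $x_n$ to $\Sigma$ (Definition~\ref{def:orth_proj}, using proximinality) and $\hat{x}\in\Sigma$.

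Combining the two pieces with the noise term gives the recursion
\begin{equation*}
\|x_{n+1}-\hat{x}\|_2 \le r\|x_n-\hat{x}\|_2 + |1-\lambda|\|I-\mu A^TA\|_{\mathrm{op}}\eta + \mu\|A^Te\|_2 .
\end{equation*}
A straightforward induction then yields $r^n\|x_0-\hat{x}\|_2$ plus the constant term multiplied by $\sum_{k<n} r^k \le \frac{1}{1-r}$, using $r<1$. This is exactly the claimed bound.
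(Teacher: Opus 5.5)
Your proposal is correct and follows the paper's proof essentially step for step. It uses the same splitting $z_n-\hat x=(P_\Sigma(x_n)-\hat x)+(1-\lambda)(x_n-P_\Sigma(x_n))$, the same RIC/restricted-Lipschitz bound on the secant part, the same estimate $\|x_n-P_\Sigma(x_n)\|_2\le\|x_n-\hat x\|_2+\eta$ via the orthogonal projection, and the same geometric-series induction.

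As an aside, the convex-combination split you dismissed is not weaker: for $\lambda\in[0,1]$ (the setting of this section) it gives the one-step factor $\lambda\delta\beta+(1-\lambda)\|I-\mu A^TA\|_{\mathrm{op}}\le r$ with no $\eta$ term, so it already implies the stated bound without using $P_\Sigma^\perp$ or proximinality at all.
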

\begin{proof}
We have, using the triangle inequality, 

\begin{equation}
	\begin{split}
		&\|x_{n+1}-\hat{x}\|_2 \\&= \|z_n - \mu A^T(Az_n-y)  - \hat{x}\|_2\\	
		&=   \|(I - \mu A^TA)(z_n-\hat{x}) +\mu A^Te\|_2\\
		&\leq 	 \|(I - \mu A^TA)(P_\Sigma(x_n)-\hat{x}))   + (I - \mu A^TA)(z_n-P_\Sigma(x_n))\|_2 + \mu \|A^Te\|_2\\
		&= 	 \|(I - \mu A^TA)(P_\Sigma(x_n)-\hat{x}))   + (I - \mu A^TA)((1-\lambda)x_n +\lambda P_\Sigma(x_n)-P_\Sigma(x_n))\|_2+ \mu \|A^Te\|_2\\
		&= 	\|(I - \mu A^TA)(P_\Sigma(x_n)-\hat{x}))   + (1-\lambda)(I - \mu A^TA)(x_n -P_\Sigma(x_n))\|_2+ \mu \|A^Te\|_2\\
		&\leq 	\|(I - \mu A^TA)(P_\Sigma(x_n)-\hat{x})) \|_2  +	\| (1-\lambda)(I - \mu A^TA)(x_n -P_\Sigma(x_n))\|_2+ \mu \|A^Te\|_2\\
		&\leq 	\|(I - \mu A^TA)(P_\Sigma(x_n)-\hat{x})) \|_2  +	|1-\lambda|\| I - \mu A^TA\|_{\mathrm{op}}\|x_n -P_\Sigma(x_n)\|_2+ \mu \|A^Te\|_2.\\
	\end{split}
\end{equation}
Using the definition of orthogonal projection, we have $\|x_n -P_\Sigma^\perp(x_n)\|_2 \leq  \|x_n -\hat{x}\|_2 $. With the RIC and the restricted Lipschitz property, we deduce

\begin{equation}
	\begin{split}
		&\|x_{n+1}-\hat{x}\|_2\\
		 &\leq \delta \beta 	\|x_n-\hat{x} \|_2  +	|1-\lambda|\| I - \mu A^TA\|_{\mathrm{op}}\|P_\Sigma^\perp(x_n) -\hat{x}\|_2+|1-\lambda|\| I - \mu A^TA\|_{\mathrm{op}}\eta + \mu \|A^Te\|_2\\
			&\leq \delta \beta 	\|x_n-\hat{x} \|_2  +	|1-\lambda|\| I - \mu A^TA\|_{\mathrm{op}}\|x_n -\hat{x}\|_2+|1-\lambda|\| I - \mu A^TA\|_{\mathrm{op}}\eta + \mu \|A^Te\|_2\\
		&= (\delta \beta +	|1-\lambda|\| I - \mu A^TA\|_{\mathrm{op}}) 	\|x_n-\hat{x} \|_2+ |1-\lambda|\| I - \mu A^TA\|_{\mathrm{op}}\eta + \mu \|A^Te\|_2. \\
	\end{split}
\end{equation}
Let $r = \delta \beta +	|1-\lambda|\| I - \mu A^TA\|_{\mathrm{op}}$, by induction, similarly to~\cite{joundi2025stochastic}, we have 

\begin{equation}
	\begin{split}
		\|x_{n}-\hat{x}\|_2 &\leq r^n	\|x_0-\hat{x} \|_2  + \frac{1}{1-r}\left(|1-\lambda|\| I - \mu A^TA\|_{\mathrm{op}}\eta + \mu \|A^Te\|_2\right).\\
	\end{split}
\end{equation}
\end{proof} 

We see that if we chose $\lambda =1$, we fall on GPGD iterations and obtain the same convergence rate  with modified RED (under the additional orthogonality assumption of the projection). Other admissible $\lambda$ lead to slower convergence (and less identifiability).  It leaves the question of convergence to a stable solution with a generic restricted Lipschitz projection without orthogonality assumption (as for GPGD).

\section{On the convergence of classical RED}  
In the context of convex minimization where $x_n-P_\Sigma(x_n)$ can be interpreted as the gradient of a convex function (or $\Sigma$ is a convex set) \cite{cohen2021regularization}, the convergence of such algorithm can be  proved (with typical sub-linear rates). In a non-convex setting, convergence to critical points can be obtained if $x_n-P_\Sigma(x_n)$ is globally Lipschitz (and the gradient of a function) \cite{hurault2022gradient}. To prove convergence in our context with classical proof techniques, we consider $\theta$ such that $ 0< \lambda + \theta \leq 1$ and write iterations~\eqref{eq:def_RED} as follows.
\begin{equation}
	\begin{split}
		x_{n+1}&= (1-\lambda)x_n + \lambda P_\Sigma(x_n)- \mu A^TA(x_n-\hat{x})\\	
		&= (1-\lambda -\theta)x_n + \lambda P_\Sigma(x_n) +\theta x_n -\mu A^TA(x_n-\hat{x})) \\
		&= (1-\lambda -\theta)x_n + (\lambda +\theta) T(x_n)
	\end{split}
\end{equation}
where $T: u  \mapsto \frac{1}{\lambda +\theta} (\lambda P_\Sigma(u)+\theta u- \mu A^TA(u-\hat{x}))$.
The operator $T_{\lambda+\theta}:x_n \mapsto (1-\lambda - \theta)x_n + (\lambda+\theta) T(x_n)$ is called an averaged operator of $T$. The main hypothesis to prove convergence of such fixed point schemes is that  $T$ is quasi-non expensive, i.e. restricted 1-Lipschitz with respect to $\fix (T)$ \cite{bauschkeconvex}. Indeed we verify that $\hat{x} \subset \fix(T)$.   

Applied to our case, we would need the equivalent inequalities
\begin{equation}
	\begin{split}
		\|T(u) - \hat{x}\|_2 &\leq  \|u - \hat{x}\|_2 \\
		L:= \| \frac{1}{\lambda +\theta} (\lambda P_\Sigma(u)+\theta u- \mu A^TA(u-\hat{x}))- \hat{x}\|_2^2 &\leq  \|u - \hat{x}\|_2^2. \\
	\end{split}
	\end{equation} 
We can  write 
	\begin{equation}
		\begin{split}
			L&= \|(I-\frac{\mu}{\lambda+\theta} A^TA)(P_\Sigma(u)-\hat{x}) + \frac{1}{\lambda +\theta} (-\theta P_\Sigma(u)+\theta u- \mu A^TA (u-P_\Sigma(u))\|_2^2 \\
			&= \|(I-\frac{\mu}{\lambda+\theta} A^TA)(P_\Sigma(u)-\hat{x}) + \frac{1}{\lambda +\theta} (\theta I - \mu A^TA )(u-P_\Sigma(u))\|_2^2  \\
			&= \|w_1 +w_2\|_2^2
		\end{split}
	\end{equation} 
	where $w_1 =(I-\frac{\mu}{\lambda+\theta} A^TA)(P_\Sigma(u)-\hat{x})  $ and $w_2=\frac{1}{\lambda +\theta} (\theta I - \mu A^TA )(u-P_\Sigma(u))$.
	
When possible, under suitable scaling of $\frac{\mu}{\lambda +\theta}$,  we have that $\|w_1\|_2 \leq \delta(\frac{\mu}{\lambda+\theta}) \beta \|u -\hat{x}\|_2  < \|u -\hat{x}\|_2$. The main problem is that under the RIP  we cannot generally bound  $w_2$ with a contractive term as $u-P_\Sigma(u)\notin \Sigma-\Sigma$. Moreover, without convexity of the model set $\Sigma$ we cannot really hope to control the  sign of the term $ \ls u-P_\Sigma(u), P_\Sigma(u)-\hat{x} \rs $ if we were to develop the squared norm.

 Suppose we can decompose $u-P_\Sigma(u) = \sum_i \lambda_i v_i$, with $\sum_i \lambda_i =1$ and   $v_i \in \Sigma - \Sigma$ (possible when $\tspan \Sigma = \bR^N$ and $\Sigma$ homogeneous). A way to use the RIC to bound $w_2$ is to use such decomposition. By convexity, we have 
	\begin{equation}
	\begin{split}
		\|w_2\|_2^2& \leq\sum_i \lambda_i \left(\frac{\theta}{\lambda+\theta} \|(I - \frac{\mu}{\theta}A^TA)v_i\|_2 \right)^2\leq \left(\frac{\theta}{\lambda+\theta} \delta(\frac{\mu}{\theta}A^TA)\right)^2 \sum_i \lambda_i\|v_i\|_2^2
	\end{split}
\end{equation} 
taking the infimum over the possible decompositions $(\lambda_i,v_i)_i, v_i \in \Sigma - \Sigma$, we get  
\begin{equation}
	\begin{split}
		\|w_2\|_2& \leq \frac{\theta}{\lambda+\theta} \delta(\frac{\mu}{\theta}A^TA)  \|u-P_\Sigma(u)\|_{\Sigma-\Sigma}
	\end{split}
\end{equation} 
where $\|z\|_{\Sigma-\Sigma} = \inf_{z_i \in \Sigma-\Sigma, \sum_i \lambda_i z_i = z, \sum_i \lambda_i =1} \sqrt{\sum_i\lambda_i\|z_i\|_2^2}$ is the atomic norm induced by $\Sigma-\Sigma$ (see  \cite{traonmilin2018stable}).
Now we cannot generally relate the distance $u-P_\Sigma(u)$ with $u - \hat{x}$ except when $P_\Sigma =P_\Sigma^\perp$ is the orthogonal projection. In this case, we have  $\|u-P_\Sigma(u)\|_2$ and
\begin{equation}
	\begin{split}
		\|w_2\|_2& \leq \frac{\theta}{\lambda+\theta} \delta(\frac{\mu}{\theta}A^TA)  C_\Sigma \|u-\hat{x}\|_2
	\end{split}
\end{equation} 
where $C_\Sigma$ is an equivalence constant between $\|\cdot\|_2$ and $\|\cdot\|_{\Sigma-\Sigma}$. Hence  quasi non-expansiveness would be achieved under the condition 
\begin{equation}
  \delta(\frac{\mu}{\lambda+\theta}A^TA) \beta + \frac{\theta}{\lambda+\theta} \delta(\frac{\mu}{\theta}A^TA)  C_\Sigma \leq 1.
\end{equation} 

To achieve this, we would  need e.g. a small $\theta$, an optimal $\mu$ and $\lambda \approx 1$ and very favorable identifiability conditions of the GPGD scheme  $\delta(\mu A^TA) \beta< 1$ with $\delta(A^TA)\beta$ small.  Maybe the constant $C_\Sigma$ could prevent this from being possible. This leads to following open questions. Is it possible to  show  convergence of the classical RED iterations with RIC and restricted Lipschitz conditions in this theoretical set-up, with less restrictive conditions?

Given the results of the previous Section, what theoretical set-up would warrant the use of a (classical or modified) RED scheme instead of GPGD  as convergence rates and identifiability properties are worse than GPGD? 

This work provides a first analysis of the convergence of a modified version of the RED algorithm under RIC and restricted Lipschitz conditions.
We hope that it  will be the first step into proving the convergence of the original RED algorithm under similar hypotheses.

\bibliographystyle{abbrv}
\bibliography{note_mod_RED.bib}
\end{document}